\documentclass[11pt]{amsart}
\setlength{\textwidth}{6.0in}
\setlength{\oddsidemargin}{0.25in}
\setlength{\evensidemargin}{0.25in}
\usepackage{amsfonts}
\usepackage{amscd}
\usepackage{amssymb}
\usepackage{graphics}

\usepackage{amsmath}

\usepackage{hyperref}
\hypersetup{colorlinks,citecolor=blue}

\addtolength{\hoffset}{-0.5cm}
\addtolength{\textwidth}{1cm}
\newcommand{\BZ}{{\mathbb{Z}}}
\newcommand{\BN}{{\mathbb{N}}}
\newcommand{\BR}{{\mathbb{R}}}

\newcommand{\gC}{\Gamma}

\newcommand{\ga}{\alpha}

\newcommand{\ti}[1]{\tilde{#1}}

\newcommand{\vol}{\text{vol}}

\newcommand{\SL}{\text{SL}}

\newcommand{\SO}{\text{SO}}

\newtheorem{prop}{Proposition}[section]
\newtheorem{thm}[prop]{Theorem}
\newtheorem{lem}[prop]{Lemma}
\newtheorem{cor}[prop]{Corollary}
\newtheorem{conj}[prop]{Conjecture}
\theoremstyle{definition}

\newtheorem{defn}[prop]{Definition}

\catcode`\@=11
\long\def\@savemarbox#1#2{\global\setbox#1\vtop{\hsize\marginparwidth 
  \@parboxrestore\tiny\raggedright #2}}
\marginparwidth .75in \marginparsep 7pt

\catcode`\@=12

\begin{document}
\author{Tsachik Gelander and Paul Vollrath}

\address{Mathematics and Computer Science\\
Weizmann Institute\\
Rechovot 76100, Israel\\}
\email{tsachik.gelander@gmail.com, paul.vollrath@weizmann.ac.il}
\thanks{Supported by ISF-Moked grant 2095/15 and by ERC advanced grant 101052954}

\date{\today}

\title{Bounds on systoles and homotopy complexity}

\maketitle

\begin{abstract}
The main purpose of this note is to prove a lower bound on the systole of compact arithmetic manifolds in terms of the volume. The proof relies on Prasad's volume formula.
For the special case of arithmetic hyperbolic manifolds the same bound was established earlier in \cite{Be}.
We also discuss some applications, and give a straightforward proof for a weak version of the homotopy type conjecture from \cite{Ge}.
\end{abstract}


\section{Lower bound on the systole}
Let $G$ be a semisimple Lie group without compact factors and let $X=G/K$ be the associated symmetric space. We normalize the Haar measure of $G$ to be compatible with the Riemannian volume form of $X$.
Let $\gC\le G$ be a co-compact arithmetic group and $M=\gC\backslash X$ the corresponding compact $X$-orbifold. 
%
%
%

	\subsection{Embedding of $G/K$}
	Recall that $P(n, \mathbb{R})$, the space of all unimodular positive definite symmetric matrices of size $n\times n$, can be identified with the symmetric space $SL(n, \mathbb{R})/SO(n)$. This space is equipped with a metric $d$ such that for a point $x\in P(n, \mathbb{R})$ with eigenvalues $a_1, \dots, a_n$ the distance to identity is given by the following formula (cf. \cite{BH} Chapter II, Corollary 10.42).
	\begin{equation}
		d(I,x)=\sqrt{\sum_{i=1}^{n}\log^2(a_i)}.
		\label{eq:DistanceToID}
	\end{equation} 
	For a semisimple group $G$ the existence of Cartan involutions as described by G. Mostow (cf. \cite{Mos49}, Theorem 1) allows us to choose an appropriate scalar product $B$ on its Lie-algebra $\mathfrak{g}$ so that the map $\phi: G\rightarrow P(n, \mathbb{R}), g\mapsto Ad(g)Ad(g)^t$ provides a totally geodesic embedding of $G/K$ into $P(n,\mathbb{R})$ with $n=dim(\mathfrak{g})$. Any $g\in G$ acts on $\phi(G)$ by the isometry 
	$$
	I_g:\phi(G)\rightarrow\phi(G), x\mapsto Ad(g)xAd(g)^t.
	$$
	
	For a monic polynomial $f=\prod_{i} (x-a_i)$ the (exponential) Mahler measure $M(f)$ is defined by $M(f)=\prod_{\lvert a_i\rvert>1}\lvert a_i\rvert$. Originally the Mahler measure was defined for polynomials over $\mathbb{Q}$ with this definition, but we may extend to polynomials with complex coefficients. 
	Abusing notation, we set $M(g)$ to be the Mahler measure of the characteristic polynomial of $Ad(g)$ for $g\in G$.
	Now we can make the connection between Mahler measure $M(g)$ and the translation distance of $g$.
	\begin{prop}
		For any $g\in G$ the following holds:
		\begin{equation}
			\inf_{x\in P(n, \mathbb{R})}d(x,I_g(x))\geq \frac{2\log(M(g))}{n}.
			\label{eq:TranslationByMahler}
		\end{equation}
	\end{prop}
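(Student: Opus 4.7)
The plan is to obtain a pointwise lower bound on $d(x,I_g(x))$ for every $x\in P(n,\BR)$ by reformulating the displacement as a singular-value quantity for a conjugate of $\Ad(g)$ and then invoking Weyl's multiplicative majorization inequality. No CAT$(0)$ or translation-length machinery is required; the whole argument is linear algebra plus the distance formula \eqref{eq:DistanceToID}.

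First, I would use the fact that $y\mapsto x^{-1/2}yx^{-1/2}$ is an isometry of $P(n,\BR)$ sending $x$ to $I$, to rewrite
\[
d(x,I_g(x)) \;=\; d(I, BB^t), \qquad B:=x^{-1/2}Ax^{1/2},\ \ A:=\Ad(g)\in\SL_n(\BR).
\]
Since the eigenvalues of $BB^t$ are the squared singular values $\sigma_i(B)^2$, formula \eqref{eq:DistanceToID} becomes $d(x,I_g(x))=2\sqrt{\sum_{i=1}^n\log^2\sigma_i(B)}$. The point of this reformulation is that $B$ is similar to $A$ via $x^{1/2}$, so the eigenvalues of $B$ are exactly the eigenvalues $\lambda_1,\dots,\lambda_n$ of $\Ad(g)$, while the \emph{distance} depends only on the singular values of $B$.

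Next, I would apply Weyl's inequality $\prod_{i=1}^k\sigma_i(B)\ge\prod_{i=1}^k|\lambda_i|$ (both sequences ordered by decreasing modulus) with $k$ equal to the number of eigenvalues of $\Ad(g)$ of modulus greater than $1$. Since $\max(\sigma_i(B),1)\ge\sigma_i(B)$ for every $i$, this gives $\prod_{i=1}^n\max(\sigma_i(B),1)\ge M(g)$, i.e.\ $\sum_{\sigma_i(B)>1}\log\sigma_i(B)\ge\log M(g)$. A Cauchy--Schwarz estimate on the at most $n$ positive terms then yields
\[
\sqrt{\sum_{i=1}^n\log^2\sigma_i(B)}\;\ge\;\frac{1}{\sqrt{n}}\sum_{\sigma_i(B)>1}\log\sigma_i(B)\;\ge\;\frac{\log M(g)}{\sqrt{n}}\;\ge\;\frac{\log M(g)}{n},
\]
and multiplying by the factor $2$ from \eqref{eq:DistanceToID} proves \eqref{eq:TranslationByMahler} pointwise in $x$, hence after taking the infimum.

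The only substantive ingredient is Weyl's multiplicative majorization; the rest is bookkeeping. The step that requires care, and the main obstacle in the plan, is selecting the similarity $B=x^{-1/2}Ax^{1/2}$ so that the eigenvalues appearing in Weyl's bound are those of $\Ad(g)$ rather than, say, those of the unrelated positive matrix $AxA^t$. The case $M(g)\le 1$ is trivial since the right-hand side of \eqref{eq:TranslationByMahler} is then non-positive.
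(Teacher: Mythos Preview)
Your proof is correct, and the overall architecture matches the paper's: both reduce $d(x,I_g(x))$ to $d(I,BB^t)$ with $B=x^{-1/2}\Ad(g)\,x^{1/2}$ similar to $\Ad(g)$, and then compare the singular values of $B$ (which govern the distance) to the eigenvalues of $\Ad(g)$ (which govern the Mahler measure).

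The difference lies in that comparison step. The paper uses only the elementary fact that the top singular value of $B$ dominates its spectral radius, i.e.\ $\sigma_1(B)\ge|\lambda_{\max}|$, together with the crude estimate $M(g)\le|\lambda_{\max}|^n$; this already gives $d(I,BB^t)\ge 2\log|\lambda_{\max}|\ge 2\log M(g)/n$. You instead invoke the full Weyl multiplicative majorization $\prod_{i\le k}\sigma_i(B)\ge\prod_{i\le k}|\lambda_i|$ and then Cauchy--Schwarz. This is slightly heavier machinery, but it buys you a genuinely sharper bound: your chain of inequalities actually yields $d(x,I_g(x))\ge 2\log M(g)/\sqrt{n}$ before you throw away a factor $\sqrt{n}$ to match the stated inequality. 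So your argument is a strict refinement of the paper's, and the paper's argument is essentially the $k=1$ special case of yours.
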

	\begin{proof}
		Since $x$ is a positive definite symmetric matrix we can find a positive definite symmetric matrix $\sqrt{x}$, whose square is $x$. Thus we have $d(x, I_g(x))= d(I, (\sqrt{x}^{-1}g\sqrt{x})(\sqrt{x}^{-1}g\sqrt{x})^t)$. 

		Now for  $a\in SL(n,\mathbb{R})$ the matrix $a a^t$ is symmetric and thus its maximal eigenvalue is given by
		\begin{equation}
			\max_{v\in V, \lvert v\rvert=1} (v^ta)(a^tv)
		\end{equation} 
		This shows that if $\lambda$ is the absolute of the largest eigenvalue of $a$ then $a a^t$ posses an eigenvalue larger than $\lambda^2$.
		Thus Inequality (\ref{eq:TranslationByMahler}) follows from Equation (\ref{eq:DistanceToID}). 
	\end{proof}
	We note that the bound is not optimal, for a better bound see \cite{LLM} which has the same asymptotics but improved coefficients.
	
	We denote by $Syst_1(M)$ the systole of $M$, that is the length of a shortest non-trivial closed geodesic in $M$.
	An element $g\in SL(n,\mathbb{R})$ which corresponds to a hyperbolic isometry of $P(n, \mathbb{R})$ must have at least one eigenvalue whose absolute value is larger than 1. Therefore: 
	\begin{cor}
		The systole of an arithmetic manifold $M=\Gamma\backslash G/K$ is bounded from below by:
		\begin{equation}
			Syst_1(M)\geq inf_{\gamma\in \Gamma, M(\gamma)>1}\frac{2\log(M(\gamma))}{n}.
			\label{eq:SystoleByMahler}
		\end{equation}
		\label{Cor:SystoleByEmbedding}
	\end{cor}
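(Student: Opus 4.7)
The plan is to derive the corollary directly from Proposition~2.1 (the Mahler-measure bound on translation length in $P(n,\BR)$) combined with the fact that $\phi : G/K \to P(n,\BR)$ is a $G$-equivariant totally geodesic isometric embedding. I would begin by recalling that the systole of $M = \gC\backslash G/K$ is realized as the infimum of the translation lengths
\[
\ell(\gc) \;:=\; \inf_{x \in G/K} d_{G/K}(x, \gc x)
\]
over the non-trivial $\gc \in \gC$ that act as hyperbolic (axial) isometries on $G/K$, since every non-trivial closed geodesic arises as the projection of an axis of such a $\gc$.

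Next, because $\phi$ intertwines the action of $\gc$ on $G/K$ with that of $I_\gc$ on $\phi(G/K)$, and because $\phi$ is isometric onto its image, the translation length is preserved. The inclusion $\phi(G/K) \subset P(n,\BR)$ then gives $\ell(\gc) \geq \inf_{y \in P(n,\BR)} d(y, I_\gc(y))$ (the infimum only decreases when its domain is enlarged), and applying Inequality~(\ref{eq:TranslationByMahler}) I would conclude
\[
\ell(\gc) \;\geq\; \frac{2 \log M(\gc)}{n}.
\]

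Finally, since only $\gc$ with positive translation length can produce a closed geodesic, and since the right-hand side above is positive only when $M(\gc) > 1$, it suffices to take the infimum over those $\gc \in \gC$ with $M(\gc) > 1$; this yields the claimed lower bound on $Syst_1(M)$. There is no real obstacle in this argument: it is a straightforward concatenation of (i) the isometric intertwining property of $\phi$, (ii) monotonicity of the infimum in the domain, and (iii) the Mahler-measure bound of the preceding proposition. The only bookkeeping is the identification of the $\gc \in \gC$ that actually contribute to the systole as exactly the hyperbolic ones, which are precisely those with $M(\gc) > 1$.
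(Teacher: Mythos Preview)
Your proposal is correct and follows essentially the same approach as the paper, which derives the corollary in one sentence from Proposition~2.1 together with the observation that any $\gamma$ acting hyperbolically on $P(n,\BR)$ must have an eigenvalue of absolute value greater than $1$ (hence $M(\gamma)>1$). Your write-up simply makes explicit the intermediate steps (equivariance of $\phi$, monotonicity of the infimum under enlarging the domain) that the paper leaves implicit; the only minor overstatement is the word ``precisely'' in your last line---only the implication ``hyperbolic $\Rightarrow M(\gamma)>1$'' is needed, and enlarging the index set of the infimum to all $\gamma$ with $M(\gamma)>1$ can only help the inequality.
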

	\subsection{Degree of Definition and Dobrowolski's bound}
	Proving that the systole of an arithmetic manifold is bounded by its volume consists of two steps: First one bounds the systole by means of the algebraic data defining the arithmetic lattice and then applying Prasad's volume formula to express the lower bound for the systole in terms of the volume. To this end we now prove that we can find for each arithmetic lattice an effectively represented integral matrix group which surjects onto the lattice. 
	
	Recall that the isometry group of a simply connected non-positively curved manifold has no compact normal subgroups (\cite{Hel01} Chapter IV, Theorem 3.3). By the definition of arithmeticity there is for any arithmetic subgroup $\Gamma<G$, a connected algebraic group $H$ defined over $\mathbb{Q}$ and a compact normal subgroup $L\lhd H$ such that $H/L$ is isomorphic to $G$. Further the image of $H(\mathbb{Z})$ under this isomorphism is commensurable with $\Gamma$. The adjoint map is defined over $\mathbb{Z}$ (\cite{Vin71} Lemma 8) and  $G$, being the group of isometries of a non-positively curved symmetric space, is centre-free, so we can replace both $H$ and $G$ by their adjoint groups and use the corresponding projection $p:Ad(H)\rightarrow Ad(H)/Ad_H(L)$ and isomorphism $\phi:Ad(H)/Ad_H(L)\rightarrow Ad(G)$.
	
	Observe that since $H$ and $G$ are connected we can find a matrix $f \in SL(\mathfrak{g}, \mathbb{R})$ such that $f$ induces $\phi$ by conjugation. Further, since the kernel is compact, so $h\in H$ and $p(h)$ share the same eigenvalues of absolute different to $1$, which implies $M(h)=M(p(h))$. So we can conclude that for all $h\in H$, $M(h)=M(\phi\circ p(h))$. This observation is important as the lower bound for the systole will be provided by the so called Dobrowolski's bound on the Mahler measure:
	\begin{thm}
		(\cite{Dob79}) Given a non-cyclotomic integral monic polynomial $f=\prod_{i=1}^d (x-a_i)$ of degree $d$, the Mahler measure is bounded away from $1$ by its degree as in inequality 
		\begin{equation}
			\log(M(f))\geq \tilde{c}\left(\frac{\log\log d}{\log d}\right)^3,
			\label{eq:Dobrowolski}
		\end{equation}
	where $\ti c$ is a universal constant.		
	\end{thm}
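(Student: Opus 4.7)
The plan is to combine a Frobenius-type divisibility estimate with an auxiliary polynomial (Thue--Siegel) construction over many small primes, following Dobrowolski's strategy. First I would reduce to the case of irreducible $f$: since the Mahler measure is multiplicative and a non-cyclotomic monic integer polynomial of degree $d$ admits a non-cyclotomic irreducible monic integer factor of degree at most $d$, establishing the bound for the factor implies the bound for $f$ (small degrees can be absorbed into the constant $\tilde c$). Assume then that $f$ is irreducible with roots $\alpha_1,\dots,\alpha_d$, set $K=\mathbb{Q}(\alpha_1)$, and write $M=M(f)$.

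The algebraic input is a Frobenius divisibility. From the identity $f(x)^{p}\equiv f(x^{p})\pmod{p}$ in $\mathbb{Z}[x]$, substituting $x=\alpha_i$ gives $f(\alpha_i^{p})\in p\,\mathcal{O}_K$, so that
\begin{equation*}
p^{d}\;\big|\;\mathrm{Res}(f,f_p)=\prod_{i=1}^{d} f(\alpha_i^{p}), \qquad f_p(x):=\prod_i(x-\alpha_i^{p}).
\end{equation*}
Because $f$ is non-cyclotomic, Kronecker's theorem forces only finitely many primes $p$ to have some $\alpha_i^p$ coincide with a conjugate of $\alpha_1$; for all other primes the resultant is a nonzero integer, hence $|\mathrm{Res}(f,f_p)|\geq p^{d}$. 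A routine estimate $|\alpha_i^{p}-\alpha_j|\leq 2\max(1,|\alpha_i|)^{p}\max(1,|\alpha_j|)$ yields $|\mathrm{Res}(f,f_p)|\leq 2^{d^{2}} M^{d(p+1)}$, and choosing $p$ of order $d$ already produces a Schinzel--Zassenhaus-type lower bound $\log M \geq c\,\log d/d$, which is weaker than what the theorem demands.

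To reach Dobrowolski's exponent, I would leverage several primes simultaneously. Fix a parameter $N$ and let $p_1<\dots<p_k$ enumerate the primes in $[N,2N]$, so that $k$ is of order $N/\log N$ by the prime number theorem. Using Siegel's lemma I would construct an auxiliary polynomial $F(x_1,\dots,x_k)\in\mathbb{Z}[x_1,\dots,x_k]$ of partial degree $T$ in each variable and controlled coefficient height, vanishing to prescribed orders at certain integer tuples, so that
\begin{equation*}
\mathcal{E} := \prod_{i=1}^{d} F\bigl(\alpha_i^{p_1},\dots,\alpha_i^{p_k}\bigr)
\end{equation*}
is a nonzero rational integer. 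The Frobenius congruences at each $p_s$, combined with the vanishing structure of $F$, force a large $p_s$-adic valuation of $\mathcal{E}$, while its archimedean size is bounded by $H(F)^{d}\,M^{dT\sum_s p_s}$. Comparing the lower and upper bounds and optimizing $T$ and $N$ in terms of $d$---the relevant regime being roughly $T\approx\log d$ and $N\approx(\log d)^{2}/\log\log d$---produces the exponent $3$ in $\bigl(\log\log d/\log d\bigr)^{3}$.

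The main obstacle is the auxiliary polynomial construction: the vanishing multiplicities and the weighting in the product defining $\mathcal{E}$ must be arranged so that the combined prime-by-prime divisibility genuinely dominates the archimedean upper bound, which is precisely the transcendence-theoretic core of Dobrowolski's argument. Once $F$ is in place, the remaining steps---bounding $H(F)$ via Siegel's lemma, assembling the Frobenius estimates across the primes $p_s$, and carrying out the elementary optimization of parameters---are essentially routine.
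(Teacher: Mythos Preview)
The paper does not prove this theorem at all: it is quoted as a known result from \cite{Dob79} and used as a black box, so there is no ``paper's own proof'' to compare against. Your task here was therefore vacuous from the paper's point of view; what you have written is an independent sketch of Dobrowolski's argument.

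As a sketch, the first half is essentially right: the reduction to irreducible $f$, the Frobenius divisibility $p^{d}\mid\prod_{i,j}(\alpha_i^{p}-\alpha_j)$, the observation that a single prime gives only a Schinzel--Zassenhaus bound, and the idea of amplifying over the primes in $[N,2N]$ are all the genuine ingredients. (A small correction: for irreducible non-cyclotomic $f$ one has $\alpha_i^{p}\neq\alpha_j$ for \emph{every} prime $p$, not just all but finitely many---if the $p$-th power map permuted the roots, iterating would force them to be roots of unity.)

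Where your outline drifts from Dobrowolski is the amplification step. Dobrowolski does not build a multivariate auxiliary polynomial $F(x_1,\dots,x_k)$ via Siegel's lemma; he works with an explicit determinant (essentially a confluent Vandermonde built from the power sums $s_n=\sum_i\alpha_i^{n}$ evaluated at indices of the form $np$ for $p$ running over the chosen primes). The divisibility $\prod_p p^{d}$ and the Hadamard bound on the determinant then give the inequality directly, with no existence step. The Siegel-lemma approach you describe is closer in spirit to Cantor--Straus and later refinements; it can be made to work and yields the same exponent, but the vanishing conditions you allude to (``prescribed orders at certain integer tuples'') would need to be specified precisely before one can check that the $p$-adic lower bound actually beats the archimedean upper bound. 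As written, that core step is acknowledged but not carried out, so the proposal is a correct roadmap rather than a proof.
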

	Up to a change of basis over $\mathbb{Q}$ we can assume that $\Gamma\subset p(H(\mathbb{Z}))$ (\cite{PR94}). As noted before  $M(p(h))=M(h)$ and thus the Mahler measure of $\gamma\in \Gamma$ is the Mahler measure of some integral matrix i.e. the premiage of $\gamma$ in $H(\mathbb{Z})$. The characteristic polynomial of $h\in H$ has degree $dim(H)$ as $H$ is adjoint. In order to apply Dobrowolski's bound we need to calculate $dim(H)$. $H$ is semi-simple so we can decompose it into a product fo simple parts. Each simple part is an algebraic group in itself defined over a number field.  For simplicity of notation we may assume that $\Gamma$ is irreducible and then all simple parts of $H$ must be Galois conjugates of each other. This means that there is a finite extension $k$ and each simple part is defined over a corresponding embedding of $k$ into $\mathbb{C}$. We call $k$ the field of definition for $\Gamma$ with degree $s=[k:\mathbb{Q}]$. As $H$ and $G$ are adjoint $dim(H)=s\cdot dim(G)$ holds. In conclusion we can find for any $\gamma\in \Gamma$ an integral polynomial of degree $s\cdot dim (G)$ which has the same Mahler measure as the characteristic polynomial of $\gamma$. Therefore, we may apply Dobrowolski's bound to corollary \ref{Cor:SystoleByEmbedding} and obtain:
	\begin{prop}\label{thm:systol}
		The systole of an arithmetic manifold $M=\Gamma\backslash G/K$, where $\Gamma$ has degree of definition $s$, is bounded from below by the following inequality: 
		\begin{equation}
				Syst_1(M)\geq c'\left(\frac{\log\log s}{\log s}\right)^3
				\label{eq:SystoleByDegree},
			\end{equation}
						for some absolute constant $c'$.
	\label{prop:SystoleByDegree}
	\end{prop}
	
Proposition \ref{thm:systol} allows us to establish:

\begin{thm}\label{prop:main}\label{thm:main}
		The systole of an arithmetic manifold is bounded from below in terms of its volume as in the following equation:
		\begin{equation}
			Syst_1(M)\geq C_1\left(\frac{\log\log\log \vol(M)^C}{\log\log \vol(M)^C}\right)^3
			\label{eq:SystoleByVolume}
		\end{equation}
		where $C,C_1$ are constants depending only on $G$.
	\end{thm}
	
	For $G=\text{PO}(n,1),~n\ge 2$, Theorem \ref{prop:main} was proved by M. Belolipetsky in \cite{Be10,Be}. Our proof is essentially the same.

	\begin{proof}
		M. Belolipetsky extracted the following bound for the co-volume of an arithmetic lattice  from  
		Prasad's volume formula in terms of the discriminant $\mathcal{D}_k$ (see \cite{Bel07} \S 3.3): 
		\begin{equation}
			covol(\Gamma)\geq d_k^{\delta_1}d_{l/k}^{\delta_2}/2
		\end{equation} 
		where $\delta_i>0$, $d$ is the absolute value of the discriminant $\mathcal{D}$ and $l$ is a field extension of $k$ with $[l:k]\leq 3$. Applying Minkowski's bound for the discriminant, we first may ignore $d_{l/kj}$ as it is larger than $1$ and then bound $d_{k}$ from below by an exponential function in $[k:\mathbb{Q}]$ (cf. \cite{Lan86}). 
		Taking a logarithm this yields
		\begin{equation}
			s\leq c_1 log(covol(\Gamma))+c_2\leq c_3 log(covol(\Gamma))
			\label{eq:SuperVolume}
		\end{equation}
		where $c_i$ depends on $G$ and the last inequality is implied by Kazhdan--Margulis theorem bounding the covolume of arithmetic lattices from below. Substituting Inequality \ref{eq:SuperVolume} into Inequality \ref{eq:SystoleByDegree} we obtain Theorem \ref{prop:main}.
	\end{proof}

%
%

%
%
%


\section{The homotopy type conjecture}

Let $d\in \BN$ and $v>0$. A $(d,v)$-simplicial complex is a simplicial complex with at most $v$ vertices such that all the vertices are of degree  bounded by $d$. The parameter $d$ is related to dimension where $v$ is related to volume. Note that the number of $k$-simplices of a $(d,v)$-simplicial complex is at most $v\cdot {d \choose k}/(k+1)$ which is linear in terms of $v$. 

\begin{defn}\label{def:HC}
Let $\mathcal{F}$ be a family of $d$-dimensional Riemannian manifolds. We say that the family $\mathcal{F}$ has {\it uniform homotopy complexity} if there are constants $D,\ga$ such that every $M\in \mathcal{F}$ is homotopy equivalent to some  $(D,\ga\cdot\vol(M))$-simplicial complex $\mathcal{R}$.
\end{defn}

The homotopy type of aspherical manifolds is determined by the fundamental group which is often easier to control. This is the reason to make the following definition.

\begin{defn}\label{def:pi_1}
Let $\mathcal{F}$ be a family of $d$-dimensional aspherical Riemannian manifolds. We say that the family $\mathcal{F}$ has {\it uniform $\pi_1$-complexity} if there are constants $D,\ga$ such that every $M\in \mathcal{F}$ satisfies $\pi_1(M)\cong\pi_1(\mathcal{R})$
for some  $(D,\ga\cdot\vol(M))$-simplicial complex $\mathcal{R}$.
\end{defn}

Note that the simplicial complex $\mathcal{R}$ in Definition \ref{def:pi_1} is not required to be aspherical. 

Recall the following conjecture from \cite{Ge}:

\begin{conj}\label{conj:HTC}
Let $X$ be a symmetric space of non-compact type. Then the family of arithmetic $X$-manifolds has uniform homotopy complexity. 
\end{conj}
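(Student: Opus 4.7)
The plan is to produce $\mathcal{R}$ as the nerve of a cover of $M = \Gamma\backslash X$ built from the thick--thin decomposition. Fix $\varepsilon$ below the Margulis constant of $X$ and split $M = M_{\geq\varepsilon}\cup M_{<\varepsilon}$. On the thick part, choose a maximal $\varepsilon/8$-separated net $\{x_i\}$ and let $\mathcal{U}$ be the cover by the balls $B(x_i,\varepsilon/4)$. Bishop--Gromov comparison on $X$ combined with the thickness hypothesis yields a uniform positive lower bound on $\vol(B(x_i,\varepsilon/8))$, so $|\mathcal{U}|\leq c\cdot\vol(M)$, and the same comparison bounds the multiplicity of $\mathcal{U}$ by a constant $D_0$. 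Since $\varepsilon/4$ is smaller than the injectivity radius on $M_{\geq\varepsilon/2}$, each ball and every non-empty intersection lifts to a convex subset of $X$ and is therefore contractible; the nerve theorem then gives a $(D_0,c\cdot\vol(M))$-complex $\mathcal{N}$ homotopy equivalent to $M_{\geq\varepsilon}$.

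To pass from $M_{\geq\varepsilon}$ to $M$ one attaches the thin components. By the Margulis lemma, each connected component of $M_{<\varepsilon}$ is either a tube around a short closed geodesic or a cusp neighborhood. Tubes deformation retract onto their core circles, each contributing one additional vertex and a bounded number of edges; the systole lower bound from Prasad's formula (the main estimate of the present paper) bounds the number of tubes by $c'\cdot\vol(M)$. Cusps deformation retract onto their compact cross sections, which are virtually nilmanifolds; a cross section of volume $V$ admits a bounded-degree triangulation with $O(V)$ vertices obtained from a fundamental domain of its nilpotent cover, and reduction theory for arithmetic lattices bounds the total volume of all cusp cross sections by $O(\vol(M))$. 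Gluing these thin-piece complexes to $\mathcal{N}$ along $\partial M_{\geq\varepsilon}$ produces the desired $\mathcal{R}$.

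The main obstacle is the thick--thin interface. A naive attachment can inflate the degree of vertices on $\partial M_{\geq\varepsilon}$ as the volume grows, because a single vertex of $\mathcal{N}$ adjacent to a large cusp could end up joined to a number of simplices of the attached thin piece that itself grows with $\vol(M)$. Avoiding this requires refining the net near $\partial M_{\geq\varepsilon}$, using collar neighborhoods of uniform geometry, and verifying that vertices near the interface meet only boundedly many thin pieces. In the non-compact higher-rank case this amounts to a uniform version of the reduction theory of arithmetic lattices compatible with bounded-geometry simplicial approximation, which is the principal obstruction to proving the full conjecture by this route, and is presumably why the present paper settles for a weak variant in the compact case --- where cusps are absent and the systole estimate alone suffices to control the few short-geodesic tubes.
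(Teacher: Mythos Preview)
The statement you are attempting to prove is Conjecture~\ref{conj:HTC}, which the paper does \emph{not} prove; it establishes only the weaker Theorem~\ref{thm:main}, with an extra factor $\phi(v)$ in the vertex bound. So you are proposing a proof of an open conjecture, and the proposal has genuine gaps.

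The most serious gap is your treatment of the thin part. The dichotomy ``tube around a short geodesic or cusp neighbourhood'' is a rank-one phenomenon; for a general symmetric space of non-compact type the components of $M_{<\varepsilon}$ have virtually nilpotent fundamental group by the Margulis lemma, but they are not tubular neighbourhoods of closed geodesics, and no analogue of your ``one vertex and a bounded number of edges'' model is available. Even in rank one, replacing a tube by a single vertex does not recover the correct homotopy type: the tube is homotopy equivalent to $S^1$, and its attachment to the thick part is along a $(d-1)$-dimensional boundary whose simplicial model must be constructed with care (this is exactly the delicate point in \cite{BGS2} for negatively curved manifolds of dimension $\neq 3$, and the failure in dimension $3$ is why Theorem~\ref{thm:Fraczyk} required new ideas).

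You also have the direction of the difficulty reversed. The non-uniform (cusped) case is already completely settled by Theorem~\ref{thm:hv1}; the open part of the conjecture is precisely the \emph{compact} case. The paper's systole estimate (Theorem~\ref{prop:main}) is not a uniform positive constant --- it tends to zero as $\vol(M)\to\infty$ --- so it does not ``control the few short-geodesic tubes'' in the sense you need; it neither bounds their number (that comes from volume, not from the systole) nor eliminates them. What the paper actually does in the compact case is abandon the thick--thin decomposition entirely: it covers all of $M$ by balls of radius $\mathfrak{s}/2$, where $\mathfrak{s}$ is the volume-dependent systole, so that every ball and every intersection is convex and the nerve lemma applies directly. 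The price is that the balls shrink with $\vol(M)$ and the vertex count picks up the $\phi(v)$ factor. A proof of the full conjecture along your lines would require either the Margulis injectivity-radius conjecture (a uniform lower bound on $\mathfrak{s}$) or a bounded-complexity simplicial model of the higher-rank thin part, neither of which your outline supplies.
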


For the special case $X=\mathcal{H}^2$,  Conjecture \ref{conj:HTC} is a consequence of the Gauss--Bonnet theorem.
Conjecture \ref{conj:HTC} was proved in \cite{Ge} for non-uniform arithmetic manifolds:

\begin{thm}[\cite{Ge}, Theorem 1.5(i)]\label{thm:hv1}
Let $X$ be a symmetric space of non-compact type. The family of all non-uniform arithmetic $X$-manifolds has uniform homotopy complexity.
\end{thm}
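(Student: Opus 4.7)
The plan is to use the Margulis thick-thin decomposition to exhibit $M$ as a deformation retract of a compact submanifold $M_0$ of bounded local geometry, and then to realize $M_0$ up to homotopy as the nerve of a uniformly locally finite covering by small balls.

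First, fix $\epsilon_0=\epsilon_0(X)>0$ smaller than the Margulis constant of $X$ and set $M_0 := M_{\geq\epsilon_0}$. By the Margulis lemma every component of the complementary thin part is either a Margulis tube around a closed geodesic of length $<\epsilon_0$ or a cusp neighborhood. The essential input from arithmeticity together with non-uniformity is a uniform lower bound on the systole over the family of non-uniform arithmetic $X$-manifolds; after shrinking $\epsilon_0$ below this bound, every thin component is a cusp. Each such cusp is of the form $N\times[0,\infty)$ with $N$ a compact nilmanifold cross-section, so the radial deformation along the horospherical coordinate retracts $M$ onto $M_0$, and in particular $\vol(M_0)\leq\vol(M)$.

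On $M_0$ the sectional curvature is bounded in absolute value by $|K_X|$ and the injectivity radius is at least $\epsilon_0/2$. A standard packing argument in bounded geometry produces a maximal $(\epsilon_0/4)$-separated subset $\mathcal{N}\subset M_0$ of cardinality $|\mathcal{N}|\leq\alpha\cdot\vol(M_0)\leq\alpha\cdot\vol(M)$ for some $\alpha=\alpha(X)$. The cover of $M_0$ by the open $(\epsilon_0/2)$-balls centered at $\mathcal{N}$ has multiplicity bounded by some $D=D(X)$, since only boundedly many points of an $(\epsilon_0/4)$-separated set can lie in a ball of radius $\epsilon_0$. Because each ball is geodesically convex and every nonempty intersection is contractible, the nerve lemma identifies $M_0$ up to homotopy with the nerve $\mathcal{R}$, which is visibly a $(D,\alpha\cdot\vol(M))$-simplicial complex and is homotopy equivalent to $M$.

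The main obstacle, and the single place where arithmeticity enters, is the uniform systole bound which forces the thin part to reduce purely to cusps. In the non-uniform case this bound can be extracted from Godement's compactness criterion combined with the Jordan decomposition applied to an integral matrix representation of $\Gamma$, controlling any potentially short loxodromic element; this is the route taken in \cite{Ge}. The analogous uniform bound in the cocompact case is precisely what the present paper must establish via Prasad's volume formula in order to extend the statement to compact arithmetic manifolds.
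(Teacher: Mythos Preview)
The paper does not supply its own proof of this statement; Theorem~\ref{thm:hv1} is quoted from \cite{Ge} and used as a black box (in Section~3 it is invoked only to reduce Theorem~\ref{thm:main} to the compact case). There is therefore no in-paper argument to compare your proposal against.

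That said, your sketch is a faithful outline of the proof in \cite{Ge}: the uniform lower bound on the systole of non-uniform arithmetic $X$-manifolds (coming from the fact that non-uniformity forces the field of definition to have degree bounded in terms of $G$, so that semisimple elements have integral characteristic polynomials of bounded degree) kills the tube components of the thin part, one retracts onto a compact core of bounded geometry, and a nerve-of-good-cover argument produces the required $(D,\alpha\cdot\vol(M))$-complex. You also correctly identify that the uniform systole bound is exactly what breaks in the cocompact case and is the content the present paper supplies via Prasad's formula.

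One technical caveat worth flagging: your description of each cusp as $N\times[0,\infty)$ with $N$ a compact nilmanifold is accurate only in rank one. In higher rank the ends are governed by reduction theory and have a more intricate (corner) structure; the retraction onto a compact core still exists, but one appeals to the Borel--Serre picture rather than to a naive horospherical product. Likewise, the balls of your good cover should be taken in $M$ (not in $M_0$), and one checks that their union, sandwiched between $M_0$ and $M$, inherits the correct homotopy type; \cite{Ge} handles these points with some care. These are refinements rather than gaps---your overall strategy is the right one.
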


Concerning $\pi_1$-complexity, the following result was obtained in \cite{Ge} for all symmetric spaces of non-compact type with a few exceptions:

\begin{thm}[\cite{Ge}, Theorem 1.5(ii)]\label{thm:hv2}
Let $X$ be a symmetric space of non-compact type. Suppose that $X$ is not isometric to $\mathcal{H}^3,~\mathcal{H}^2\times\mathcal{H}^2$ or $\SL_3(\BR)/\SO(3)$. Then the family of all irreducible $X$-manifolds has uniform $\pi_1$-complexity.
\end{thm}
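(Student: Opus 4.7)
The plan is to implement a classical thick--thin strategy combined with a nerve construction, with the hypothesis that $X$ is none of the three exceptional spaces entering only at the final $\pi_1$-identification step. First I would invoke the Margulis lemma to extract a universal constant $\varepsilon_0 = \varepsilon_0(X) > 0$ and decompose $M = M_{\text{thick}} \cup M_{\text{thin}}$, where $M_{\text{thin}}$ is the set of points $x \in M$ with injectivity radius less than $\varepsilon_0/2$. In the irreducible compact case, each component of $M_{\text{thin}}$ is a standard tubular neighborhood of a short closed geodesic with well-understood topology (or is absent outright in the higher-rank case, as discussed below).

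Next I would choose a maximal $(\varepsilon_0/10)$-separated subset $\{x_i\}_{i=1}^N \subset M_{\text{thick}}$ and take $\mathcal{R}$ to be the nerve of the cover $\{B(x_i,\varepsilon_0)\}$. Since the disjoint balls of radius $\varepsilon_0/20$ around the $x_i$ all lie in $M_{\text{thick}}$, and each has volume bounded below by a constant depending only on $X$, we get $N \leq \alpha \cdot \vol(M)$. The vertex degrees in $\mathcal{R}$ are bounded by the cardinality of an $\varepsilon_0/20$-packing of a ball of radius $2\varepsilon_0$ in $X$, a constant $D = D(X)$. A standard Leray/nerve argument, using that small balls in $M_{\text{thick}}$ are geodesically convex with contractible pairwise intersections, then yields $\pi_1(\mathcal{R}) \cong \pi_1(M_{\text{thick}})$.

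The remaining and main step is to verify that $\pi_1(M_{\text{thick}}) \cong \pi_1(M)$. For a thin tube $T$ about a short geodesic in a rank-$1$ space, $\pi_1(T) \cong \BZ$ is generated by the core, and when $\dim M \geq 4$ the boundary $\partial T$ has $\pi_1 \cong \BZ$ with $\partial T \hookrightarrow T$ inducing a $\pi_1$-isomorphism, so van Kampen gives $\pi_1(M_{\text{thick}}) \cong \pi_1(M)$. In higher rank, Margulis arithmeticity, the absence of unipotents in the compact case, and a Kazhdan--Margulis-type uniform discreteness statement force $M_{\text{thin}} = \emptyset$ once $\varepsilon_0$ is chosen small enough in terms of $X$ alone. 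The three excluded spaces are precisely the ones where this analysis breaks down: in $\mathcal{H}^3$ the thin tube is a genuine solid torus whose meridian relation may survive (Dehn filling); in $\mathcal{H}^2 \times \mathcal{H}^2$ one gets mixed thin regions coming from short geodesics in each factor separately, with no uniform combinatorial description; and $\SL_3(\BR)/\SO(3)$ is the borderline higher-rank case in which a uniform lower bound on the systole coming from algebraic input is not directly available in the form needed.

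The hardest part of the argument will be uniformizing the constants $D$ and $\alpha$ so that they depend only on $X$ and not on the particular lattice. This requires coupling the Margulis constant $\varepsilon_0$ to a lower bound on the local volume of thick regions that holds uniformly across the entire family of irreducible $X$-manifolds, which in turn rests on a uniform application of the Kazhdan--Margulis theorem. Once this quantitative bookkeeping is in place, the $\pi_1$-analysis of the thin parts in the non-exceptional cases reduces to a case-by-case verification via the Margulis structure theorem for thin ends, together with the van Kampen computation above.
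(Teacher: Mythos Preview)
The paper does not prove this theorem. Theorem~\ref{thm:hv2} is quoted from \cite{Ge} as background and motivation; the present paper's contribution is Theorem~\ref{thm:main} and the systole bound of Theorem~\ref{prop:main}. There is therefore no proof here to compare your proposal against.

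That said, your sketch contains a genuine error in the higher-rank case that is worth flagging. You claim that for $\mathrm{rank}(X)\geq 2$ one can choose $\varepsilon_0$ depending only on $X$ so that $M_{\mathrm{thin}}=\emptyset$ for every compact irreducible $X$-manifold. This is false: there is no uniform lower bound on the systole over the family of all compact arithmetic $X$-manifolds. Indeed, the entire point of Theorem~\ref{prop:main} in this paper is that the best one can extract (via Dobrowolski and Prasad) is a bound that \emph{decays} with the volume. Congruence covers, or varying the field of definition, produce compact higher-rank arithmetic manifolds with arbitrarily short closed geodesics. So ``Kazhdan--Margulis-type uniform discreteness'' does not give what you want, and the absence of unipotents in a cocompact lattice says nothing about semisimple elements being bounded away from the identity.

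In the actual argument of \cite{Ge}, the higher-rank case is handled not by emptying the thin part but by analysing its structure: the thin components are tubular neighbourhoods of totally geodesic submanifolds (flats or subflats associated to the abelian groups produced by the Margulis lemma), and one shows that their inclusion into $M$ is $\pi_1$-controlled. The exclusions of $\mathcal{H}^2\times\mathcal{H}^2$ and $\SL_3(\BR)/\SO(3)$ arise from dimensional/codimension obstructions in that analysis, not from the failure of a systole bound. Your rank-one discussion for $\dim\geq 4$ is on the right track, but the higher-rank paragraph would need to be replaced entirely.
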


Note that Theorem \ref{thm:hv2} does not require arithmeticity. 

In a recent\footnote{The preprint of the current paper was written in 2021.} remarkable paper \cite{Fr}, M. Fraczyk settled Conjecture \ref{conj:HTC} for arithmetic $3$-manifolds.

\begin{thm}[Fraczyk, \cite{Fr}]\label{thm:Fraczyk}
The family of compact hyperbolic arithmetic $3$-manifolds has uniform homotopy complexity.
\end{thm}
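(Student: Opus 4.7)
The plan is to combine the Margulis thick--thin decomposition with the systole lower bound established in the present paper to produce, for each compact arithmetic hyperbolic $3$-manifold $M$, a simplicial complex $\mathcal{R}$ with $O(\vol(M))$ vertices, universally bounded local degree, and homotopy equivalent to $M$.

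Fix the $3$-dimensional Margulis constant $\mu_3$ and decompose $M = M_{\geq\mu_3}\cup M_{<\mu_3}$. Compactness of $M$ forces every component of the thin part to be a Margulis tube -- a solid torus around a primitive short closed geodesic. Since every tube deformation retracts to its core circle, $M$ is homotopy equivalent to the drilled manifold $M_0=\overline{M\setminus M_{<\mu_3}}$ with a $2$-cell attached along the meridian of each boundary torus; this is a cell-level description of the Dehn filling that recovers $M$ from $M_0$.

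On $M_0$ the injectivity radius is bounded below by $\mu_3/2$. A maximal $(\mu_3/4)$-separated subset $S\subset M_0$ therefore has $|S|=O(\vol(M))$ by disjoint-ball packing, and the nerve $N$ of the cover of $M_0$ by balls of radius $\mu_3/2$ centered at $S$ is, by the nerve lemma (metric balls in $\mathcal{H}^3$ are convex), homotopy equivalent to $M_0$ with a universal upper bound on vertex degree. The universal lower bound on the volume of a Margulis tube in dimension $3$ implies that the number of boundary tori is also $O(\vol(M))$.

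The remaining step -- and the main obstacle -- is to attach the meridian $2$-cells to $N$ without inflating the simplex count. A Margulis tube with core length $\ell$ has meridian of geometric length of order $1/\ell$, so its meridian crosses on the order of $1/\ell$ edges of $N$, and realizing the attaching cell as a subdivided disk costs $\sim 1/\ell$ simplices. The lower bound on $\mathrm{sys}(M)$ in terms of $\vol(M)$ supplied by Prasad's formula prevents the smallest $\ell$ from being too small, while a dyadic-length count of tubes (a tube of core length $\sim \ell$ has volume $\sim 1/\ell$, so there are $\lesssim\ell\cdot\vol(M)$ of them) shows that very short cores are rare. The technical heart of the argument is to combine these two bounds so that the total attaching cost is genuinely $O(\vol(M))$: the naive dyadic sum only yields $O(\vol(M)\log\log\vol(M))$, so an arithmetic-specific input is needed at this point to absorb the extra logarithmic factor, and this is where I expect the real work of Fraczyk's argument to lie.
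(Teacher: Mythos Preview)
The paper does not prove Theorem~\ref{thm:Fraczyk}; it is quoted as Fraczyk's result from \cite{Fr} and used only as background. There is therefore no ``paper's own proof'' to compare against, and your sketch should be measured against what the tools of the present paper can actually deliver and against what Fraczyk's argument really does.

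On the first point, your outline is not a proof but a roadmap with an acknowledged gap, and that gap is genuine. The systole bound of Theorem~\ref{prop:main} is the only arithmetic input available in this paper, and it decays to zero with the volume. Plugging it into your dyadic count (tubes of core length $\sim\ell$ have volume $\gtrsim 1/\ell$, meridian cost $\sim 1/\ell$, hence each dyadic scale contributes $\sim\vol(M)$, and there are $\sim\log(1/\mathrm{sys}(M))$ scales) gives at best $\vol(M)\cdot\log\log\log\vol(M)$ vertices, not $O(\vol(M))$. This is precisely the mechanism behind Theorem~\ref{thm:main}, whose $\phi(v)$ factor is the honest cost of using only the Prasad/Dobrowolski systole bound; nothing in the present paper can absorb that factor. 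So the ``arithmetic-specific input'' you flag as missing is not an afterthought but the entire content of Fraczyk's theorem.

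On the second point, Fraczyk's actual proof in \cite{Fr} does not proceed by bounding the systole and attaching meridian disks. It goes through a quantitative Benjamini--Schramm type statement: using trace-formula estimates specific to congruence arithmetic lattices in $\PSL_2(\BC)$, he shows that the volume of the $\epsilon$-thin part of $M$ is bounded by $\vol(M)^{1-\delta}$ for some $\delta>0$ (uniformly over all arithmetic $M$). This is a much stronger control of short geodesics than a lower bound on the single shortest one, and it is what makes the total meridian cost genuinely linear. Your thick--thin/nerve framework is the right skeleton, but the decisive estimate comes from a completely different source than anything in this paper.
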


Very recently, Conjecture \ref{conj:HTC} for compact arithmetic manifolds has been settled in general by M. Fraczyk, S. Hurtado and  J. Raimbault \cite{FHR}.
We remark that their proof makes use of Theorem \ref{thm:main} above.

We will now show that a slightly weaker version of conjecture \ref{conj:HTC} follows quite immediately from Theorem \ref{prop:main}.

\begin{thm}\label{thm:WHT}
Let $X$ be a symmetric space of non-compact type of dimension $d=\dim(X)$.
Then there are constants $D,\ga$ such that any arithmetic $X$-manifold is homotopy equivalent to some $(D,\ga\cdot\phi(v)v)$-simplicial complex, where $v=\vol(M)$ and $\phi$ is the (slowly growing) function:
$$
 \phi(v):=\left(\frac{\log\log(v)}{\log\log\log(v)}\right)^{3d}.
$$
\end{thm}



In view of Theorem \ref{thm:hv1} we may restrict our attention to compact arithmetic $X$-manifolds. Consider such a manifold $M$ and let $\mathfrak{s}=\min\{ Syst_1(M), 1\}$ be the systole of $M$ or $1$ if the systole is greater. By Theorem \ref{prop:main},
$$
 \mathfrak{s}\geq C_1\left(\frac{\log\log\log \vol(M)^C}{\log\log \vol(M)^C}\right)^3
$$
where $C,C_1$ are constants depending only on $X$.

Let $F$ be a maximal $\mathfrak{s}/2$-discrete subset of $M$. Then by maximality the collection $\mathcal{U}$ of $\mathfrak{s}/2$-balls centred at the points of $F$ forms a cover of $M$. Furthermore any non-empty intersection of elements of $\mathcal{U}$ is intrinsically convex and hence contractible. It follows that $\mathcal{U}$ forms a good cover of $M$. Thus, letting $\mathcal{R}=\mathcal{R}(\mathcal{U})$ be the nerve of $\mathcal{U}$, it follows from the nerve lemma (c.f. \cite[\S 2]{BGS2}) that $M$ is homotopy equivalent $\mathcal{R}$. Now since the $\mathfrak{s}/4$-balls centred at $F$ are disjoint, it follows that 
$|F|\le\vol(M)/\vol(B(\mathfrak{s}/4))$ where $B(\mathfrak{s}/4)$ is a ball of radius $\mathfrak{s}/4$ in $X$. Note that the volume of such a ball is bounded from below by the volume of an $\mathfrak{s}/4$-ball in $\BR^d$. Moreover, as the distance between any two points in $F$ which correspond to adjacent vertices in $\mathcal{R}$ is at most $\mathfrak{s}$, it follows that the degree of every vertex at $\mathcal{R}$ is at most $\vol(B(1.25\mathfrak{s}))/\vol(B(0.25\mathfrak{s}))$. Thus, Theorem \ref{thm:main} follows from the following:

\begin{lem}
There is a constant $D=D(d)$ such that whenever
$X$ is a Hadamard manifold of curvature bounded from below by $-1$ and dimension $d$ and $x_0\in X$, we have  
$$
\sup_{r\le 1} \vol(B(x_0,1.25r))/\vol(B(x_0,0.25r))\le D.
$$

\end{lem}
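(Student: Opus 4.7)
The plan is to deduce the lemma directly from Bishop--Gromov volume comparison. Since $X$ is Hadamard with sectional curvature bounded below by $-1$, its Ricci curvature satisfies $\text{Ric}\ge -(d-1)$. Hence, writing $V_{-1}(\rho)$ for the volume of a ball of radius $\rho$ in the model space $\mathcal{H}^d$ of constant curvature $-1$, the Bishop--Gromov theorem gives that $r \mapsto \vol(B(x_0,r))/V_{-1}(r)$ is non-increasing. Applied with radii $0.25r\le 1.25r$, this yields
$$
\frac{\vol(B(x_0,1.25r))}{\vol(B(x_0,0.25r))}\le \frac{V_{-1}(1.25r)}{V_{-1}(0.25r)}.
$$

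Next I would reduce to estimating the right-hand side uniformly for $r\in(0,1]$. Using the standard formula $V_{-1}(\rho)=\omega_{d-1}\int_0^\rho \sinh^{d-1}(t)\,dt$ and the elementary bound $t\le \sinh(t)\le \frac{\sinh(1.25)}{1.25}\,t$ for $t\in[0,1.25]$, one obtains two-sided estimates $c_d\rho^d \le V_{-1}(\rho)\le C_d\rho^d$ on the interval $\rho\in[0,1.25]$. Substituting the two radii $0.25r$ and $1.25r$, the $r$'s cancel and the ratio is bounded by an explicit constant depending only on $d$, namely
$$
\frac{V_{-1}(1.25r)}{V_{-1}(0.25r)} \le \frac{C_d}{c_d}\cdot 5^d =: D(d),
$$
independently of $r\in(0,1]$. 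That is the claimed $D$.

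Alternatively, and perhaps cleaner, I would observe that the function $r\mapsto V_{-1}(1.25r)/V_{-1}(0.25r)$ is continuous on $(0,1]$, tends to $5^d$ as $r\to 0^+$ (since $V_{-1}(\rho)\sim \omega_{d-1}\rho^d/d$ as $\rho\to 0$), and attains a finite maximum on the compact interval $[0,1]$ after continuous extension; call this maximum $D=D(d)$. Either form of the argument gives the lemma, with essentially no technical obstacle: the only thing to verify is that Bishop--Gromov applies with the Ricci bound coming from the sectional-curvature hypothesis, and this is standard.
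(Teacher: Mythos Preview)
Your proof is correct, but the route differs from the paper's. The paper invokes \emph{two} comparison theorems simultaneously: the Bishop--G\"unther lower bound (using the nonpositive curvature coming from the Hadamard hypothesis) gives $\vol_X(B(x_0,\rho))\ge V_0(\rho)$, while Bishop--Gromov gives $\vol_X(B(x_0,\rho))\le V_{-1}(\rho)$; combining these, the paper bounds the ratio by $V_{-1}(1.25r)/V_0(0.25r)$ and then by the single number $V_{-1}(1.25)/V_0(0.25)$. Your argument uses only the Bishop--Gromov \emph{monotonicity} statement and compares both balls to the same hyperbolic model, obtaining the sharper intermediate bound $V_{-1}(1.25r)/V_{-1}(0.25r)$. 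This is a genuine simplification: it avoids the second comparison theorem entirely, it does not actually need the upper curvature bound (so the Hadamard hypothesis could be weakened to completeness plus $\mathrm{Ric}\ge -(d-1)$), and the final estimate you produce is at least as tight. The paper's approach, on the other hand, yields a clean closed-form constant without any analysis of the function $r\mapsto V_{-1}(1.25r)/V_{-1}(0.25r)$, at the cost of invoking an extra comparison result.
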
 

The last lemma follows by combining the Bishop--Gunther (lower bound) and the Bishop--Gromov (upper bound) comparison theorems (see \cite[Theorem~3.101 on p.~169 and Theorem~4.19 on p.~214]{GHL}):

\begin{thm}\label{thm:comparison}
Let $X$ be a Hadamard manifold of dimension $d$ and sectional curvature varying in the segment $[-1,0]$. For every  $R>0$ and every $x\in X$ we have
\[ \vol_{\mathbb{E}^d}(B(0,R)) \leq \vol_X(B(x,R)) \leq \vol_{\mathcal{H}^d}B(o,R). \]
\end{thm}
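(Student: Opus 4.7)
The plan is to parametrize $B(x,R)$ by geodesic polar coordinates via the exponential map at $x$ and reduce both volume inequalities to pointwise comparisons of Jacobi field norms along radial geodesics.

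Since $X$ is Hadamard, $\exp_x \colon T_x X \to X$ is a diffeomorphism, so geodesic polar coordinates $(r,\theta) \in (0,\infty) \times S^{d-1}$ are globally valid on $X \setminus \{x\}$ with no cut-locus obstruction. By the Gauss lemma, the Riemannian volume element takes the form $dV_X = A_X(r,\theta)\, dr\, d\sigma(\theta)$ with
$$A_X(r,\theta) = \prod_{i=1}^{d-1} \|Y_i(r)\|,$$
where $Y_1,\dots,Y_{d-1}$ are the Jacobi fields along the unit-speed geodesic $\gamma_\theta(t)=\exp_x(t\theta)$ determined by $Y_i(0)=0$ and $\{Y_i'(0)\}$ an orthonormal basis of $\theta^\perp\subset T_xX$. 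The analogous rotationally symmetric area elements in the model spaces are $A_0(r) = r^{d-1}$ on $\mathbb{E}^d$ and $A_{-1}(r) = (\sinh r)^{d-1}$ on $\mathbb{H}^d$.

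The two inequalities then follow from the two halves of the Rauch comparison theorem applied to each $Y_i$. For the left inequality, the hypothesis that the sectional curvatures of $X$ are bounded above by $0$, together with the absence of conjugate points on $\gamma_\theta$ (Hadamard), places us in the G\"unther regime: one obtains $\|Y_i(r)\| \geq r$ for all $r \geq 0$, hence $A_X(r,\theta) \geq r^{d-1}$, and integrating over $[0,R] \times S^{d-1}$ yields $\vol_X(B(x,R)) \geq \vol_{\mathbb{E}^d}(B(0,R))$. For the right inequality, the lower sectional curvature bound $K_X \geq -1$ places us in the Berger/Bishop regime; since $\mathbb{H}^d$ itself has no conjugate points, Rauch II gives $\|Y_i(r)\| \leq \sinh r$ for all $r \geq 0$, hence $A_X(r,\theta) \leq (\sinh r)^{d-1}$, and integration yields $\vol_X(B(x,R)) \leq \vol_{\mathbb{H}^d}(B(o,R))$.

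The main technical point will be the careful application of the Berger direction of Rauch, which in its classical formulation requires that the comparison geodesic in the model space have no conjugate points up to time $R$ --- a condition that is automatic here since the model is $\mathbb{H}^d$. The Hadamard hypothesis on $X$ similarly rules out all cut-locus and focal-point obstructions on the domain side, so the integration can be carried out on the full ball without any need to truncate at a first conjugate time. Everything else reduces to the standard Jacobi-field/index-form computation recorded in the cited theorems of \cite{GHL}.
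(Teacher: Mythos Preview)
The paper does not actually prove this theorem; it simply records it as a quotation of the Bishop--G\"unther and Bishop--Gromov comparison theorems, with a precise citation to \cite[Theorem~3.101 and Theorem~4.19]{GHL}. Your proposal is therefore not so much an alternative to the paper's proof as a sketch of the standard textbook argument behind those cited results, and as such it is essentially correct in spirit.

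One technical point deserves care. The area density in geodesic polar coordinates is not the product $\prod_i \|Y_i(r)\|$ but the Gram determinant $\sqrt{\det\bigl(\langle Y_i(r),Y_j(r)\rangle\bigr)}$; the two agree only when the Jacobi fields remain mutually orthogonal, which holds in the constant-curvature models but not in a general Hadamard manifold. For the \emph{upper} bound this is harmless: Hadamard's determinant inequality gives $\sqrt{\det(\langle Y_i,Y_j\rangle)}\le\prod_i\|Y_i\|\le(\sinh r)^{d-1}$, so your argument goes through. For the \emph{lower} bound, however, the individual estimates $\|Y_i(r)\|\ge r$ do not by themselves force $\sqrt{\det(\langle Y_i,Y_j\rangle)}\ge r^{d-1}$, since the $Y_i$ could in principle become nearly collinear. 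The correct route is the one taken in \cite{GHL}: compare the full matrix Jacobi equation (equivalently, the Riccati equation for the shape operator of geodesic spheres) to obtain the operator inequality $S_r\ge r^{-1}\,\mathrm{Id}$ under $K\le 0$, and then integrate $\frac{d}{dr}\log A_X=\mathrm{tr}\,S_r$. This is a routine refinement of what you wrote, and it is exactly what the cited G\"unther theorem provides, but your sketch as stated has a small gap at this step.
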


Indeed, in view of Theorem \ref{thm:comparison}, $D$ is bounded from above by the volume of a $1.25$-ball in $\mathcal{H}^d$ divided by the volume of a $0.25$-ball in $\mathbb{E}^d$. 
This completes the proof of Theorem \ref{thm:WHT}.
\qed


\section{Bounds on torsion}

As demonstrated in \cite{Ge}, many homotopy invariants can be bounded in terms of the $\pi_1$-complexity (see for instance Theorem 1.7 and Theorem 1.11 in \cite{Ge}). 
One exception for this phenomenon is the torsion of the integral homology $\text{tors} H_k(M;\BZ)$.\footnote{Recall that the linear bound on the Betti number is a classical theorem of Gromov \cite[Theorem 2]{BGS} which holds in the much greater generality of analytic manifolds of a fixed dimension of bounded non-positive curvature and without Euclidean factors for the universal cover.}
Conjecture \ref{conj:HTC} implies that: 
\begin{equation}\label{linear-torsion}
 \log  |\text{tors} H_k(M;\BZ)|\le C\cdot\vol(M),
\end{equation}
for some constant $C$ depending on the universal cover. In view of Theorem \ref{thm:hv1}, the conjectural estimate (\ref{linear-torsion}) holds for non-uniform arithmetic manifolds.
Moreover, the estimate (\ref{linear-torsion}) was proved in \cite{BGS2} for the general class of negatively curved manifolds of a fixed dimension $d\ne 3$ and curvature bounded in a closed subinterval of $[-1,0)$ (the constant $C$ depends $d$ but not on the upper bound of the curvature). In particular this result applies to all locally symmetric spaces of rank one and dimension other than $3$. In dimension $3$ the analog result is not true for nonarithmetic manifolds (see \cite{Gromov}) while in view of Theorem \ref{thm:Fraczyk} it is true for the class of $3$-dimensional arithmetic manifolds (and the recent preprint \cite{FHR} confirms the case of compact arithmetic manifolds of general type). 

In view of \cite[Lemma 5.2]{BGS2}, the following weaker (but almost as good) estimate is an immediate consequence of Theorem \ref{thm:main}:

\begin{cor}
Let $X$ be a symmetric space of non-compact type of dimension $d$. There is a constant $C=C(X)$ such that for every compact arithmetic $X$-manifold, $M$, we have:
$$
 \log  |\text{tors} H_k(M;\BZ)|\le C \left(\frac{\log\log(\vol(M))}{\log\log\log(\vol(M))}\right)^{3d}\vol(M).
$$
\end{cor}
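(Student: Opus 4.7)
The plan is a direct two-line deduction: apply Theorem \ref{thm:main} to replace $M$ by a combinatorial model of controlled size, then invoke the general combinatorial torsion estimate of \cite{BGS2}. I will spell out why no additional work is required.

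First, by Theorem \ref{thm:main} applied to $M$, there exists a $(D,\alpha\cdot\phi(v)v)$-simplicial complex $\mathcal{R}$ homotopy equivalent to $M$, where $v=\vol(M)$, $\phi(v)=\bigl(\log\log v/\log\log\log v\bigr)^{3d}$, and $D,\alpha$ are constants depending only on $X$. Since the torsion subgroup of integral homology is a homotopy invariant, we have $\textup{tors}\,H_k(M;\BZ)\cong\textup{tors}\,H_k(\mathcal{R};\BZ)$, so it suffices to control the right-hand side combinatorially.

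Second, I would quote Lemma 5.2 of \cite{BGS2}, which asserts that for a finite simplicial complex $\mathcal{R}$ whose vertex degrees are bounded by $D$, one has
\[
 \log|\textup{tors}\,H_k(\mathcal{R};\BZ)|\le C'(D)\cdot N(\mathcal{R}),
\]
where $N(\mathcal{R})$ is the total number of simplices (the underlying mechanism is a Hadamard-type determinant bound on the boundary operators, whose matrices have entries of absolute value at most $1$ and at most $D+1$ nonzero entries per row, uniformly over the degrees bounded by $D$). In our case, as already observed in the first paragraph of the introduction, a $(D,w)$-simplicial complex has at most $\sum_k w\binom{D}{k}/(k+1)\le K(D)\,w$ simplices, so with $w=\alpha\phi(v)v$ we obtain
\[
 N(\mathcal{R})\le K(D)\alpha\phi(v)v.
\]

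Combining the two displays and absorbing all constants depending on $X$ into a single $C=C(X)$ gives
\[
 \log|\textup{tors}\,H_k(M;\BZ)|\le C\left(\frac{\log\log(\vol(M))}{\log\log\log(\vol(M))}\right)^{3d}\vol(M),
\]
which is the claimed estimate. Because each step is either quoting a result already in the literature or expanding the definition of a $(D,v)$-simplicial complex, there is no real obstacle to overcome; the only point worth checking carefully is that the hypothesis of \cite[Lemma~5.2]{BGS2} requires only an upper bound on the vertex degrees (not on the dimension of $\mathcal{R}$), which is exactly the information Theorem \ref{thm:main} provides.
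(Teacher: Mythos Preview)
Your proposal is correct and is exactly the argument the paper has in mind: the paper states the corollary as ``an immediate consequence of Theorem \ref{thm:main}'' in view of \cite[Lemma~5.2]{BGS2}, and you have simply spelled out that immediate consequence (homotopy invariance of torsion, the lemma's bound in terms of simplex count, and the linear-in-$w$ simplex count for $(D,w)$-complexes). There is nothing to add or change.
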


%

\end{document}